\numberwithin{equation}{section}
\newtheorem{theorem}{Theorem}[section]
\newtheorem{proposition}[theorem]{Proposition}
\newtheorem{corollary}[theorem]{Corollary}
\newtheorem{lemma}[theorem]{Lemma}
\newtheorem{problem}[theorem]{Problem}
\theoremstyle{definition}
\DeclareMathOperator{\Mat}{Mat}
\def\FF{\mathbb{F}}
\def\ZZ{\mathbb{Z}}
\def\QQ{\mathbb{Q}}
\def\kk{\Bbbk}
\newcommand{\tensor}{\otimes}
\date{\today}
\thanks{The authors were partially supported
by the BSF grant no.~2012365 (A.~B. and D.~K.),
NSF grant~DMS-1403527 (A.~B.), the ERC grant no.~247049 (D.K.)
and the Simons foundation collaboration grant no.~245735~(J.~G.)
}
\begin{document}
\newgeometry{margin=2cm}

\title{Integrable clusters}
\author{Arkady Berenstein}
\address{Department of Mathematics, University of Oregon,
Eugene, OR 97403, USA} \email{arkadiy@math.uoregon.edu}

\author{Jacob Greenstein}
\address{Department of Mathematics, University of
California, Riverside, CA 92521.} 
 \email{jacob.greenstein@ucr.edu}

\author{David Kazhdan}
\address{\noindent Department of Mathematics, Heberew University, Jerusalem, Israel}
\email{kazhdan@math.huji.ac.il}

\begin{abstract}
The goal of this note is to study quantum clusters in which cluster variables (not coefficients) commute which each other. It turns out 
that this property is preserved by mutations. Remarkably, this is equivalent to the celebrated sign coherence conjecture recently 
proved by M.~Gross, P.~Hacking, S.~Keel and M.~Kontsevich in~\cite{GHKK}.
\end{abstract}

\maketitle

\section{Main results}
Let $\tilde B$ be an integer $m\times n$ matrix, $n\le m$, and let~$\Lambda=(\lambda_{ij})_{1\le i,j\le m}$ be a rational skew-symmetric 
 $m\times m$-matrix compatible with $\tilde B$ in the sense of~\cite{BZ}, 
 that is
\begin{equation}\label{eq:compat}
 \tilde B^T\Lambda=\begin{pmatrix}D&\mathbf 0\end{pmatrix}
\end{equation}
where $D$ is a rational invertible diagonal $n\times n$-matrix.
This defines, on the one hand, a seed $\Sigma=(\mathbf x,\tilde B)$, $\mathbf x=(x_1,\dots,x_m)$ and 
the upper cluster algebra $\mathcal U=\mathcal U(\Sigma)\subset \mathscr L_m=\QQ[x_1^{\pm 1},\dots,
x_m^{\pm 1}]$ and, on the other hand, the Poisson algebra structure on~$\mathscr L_m$ via 
$\{x_i,x_j\}_\Lambda=\lambda_{ij} x_i x_j$, $1\le i,j\le m$ so that $\mathcal U$ is a Poisson subalgebra of $\mathscr L_m$. This in turn defines a Poisson scheme $\mathscr X$ such that 
for any field extension~$\FF$ of~$\QQ$,
$\mathscr X_\FF=\operatorname{Spec}(\mathcal U\tensor_{\QQ} \FF)$.

We say that the seed $\Sigma=(\mathbf x,\tilde B)$ is {\em integrable} if 
$\{x_i,x_j\}_\Lambda=0$ for all~$1\le i,j\le n$ for some $\Lambda$ compatible with~$\tilde B$. 
It is easy to show (Lemma~\ref{lem:classical-principal-Lambda}) that if $\Sigma$ is principal (see Section~\ref{sec:Proofs})
then it is integrable and the matrix~$\Lambda$ is uniquely determined by~$\tilde B$ and~$D$.
We say that $\Sigma$ is {\em completely integrable} if $\Sigma$ and 
all its mutations are integrable.
This definition is justified by the following observation.
Let $m=2n$ and assume that $\Sigma$ is integrable and that $\Lambda$ is invertible, i.e., 
$\{\cdot,\cdot\}_\Lambda$ is a symplectic bracket. 
Then for any $H\in\QQ[x_1,\dots,x_n]\setminus\{0\}$ the triple $(\mathcal U,\{\cdot,\cdot\}_\Lambda,H)$ is an integrable system with Hamiltonian~$H$; 
the natural map $\pi_{\Sigma}:\mathscr X\to \mathbb A^n$ is a Lagrangian fibration (see~\cite{A}).

\begin{theorem}
Every principal seed is completely integrable.
\end{theorem}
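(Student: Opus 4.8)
The plan is to argue by induction along mutation sequences, carrying with me a compatible skew-symmetric matrix whose upper-left $n\times n$ block vanishes. By Lemma~\ref{lem:classical-principal-Lambda} the principal seed $\Sigma$ is integrable, with a distinguished compatible $\Lambda$; writing it in $n\times n$ blocks as $\Lambda=\begin{pmatrix}\Lambda_{11}&\Lambda_{12}\\ \Lambda_{21}&\Lambda_{22}\end{pmatrix}$, integrability is exactly the assertion $\Lambda_{11}=0$, since the mutable directions are $1,\dots,n$ and these indices are never frozen. It therefore suffices to prove that mutating in any single direction $k\in\{1,\dots,n\}$ again produces a seed admitting a compatible skew-symmetric matrix with vanishing upper-left block; iterating then gives complete integrability. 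Throughout, recall that a principal seed has $m=2n$ with coefficient block the identity, so that at every seed in the mutation class the bottom $n\times n$ block of $\tilde B$ is the $C$-matrix, whose columns are the $c$-vectors of the principal mutation class.

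First I would recall the mutation rule for compatible pairs from~\cite{BZ}: for each sign $\epsilon\in\{+1,-1\}$ there is an $m\times m$ integer matrix $E=E_{k,\epsilon}$, equal to the identity outside its $k$-th column, with $E_{kk}=-1$ and $E_{lk}=\max(0,-\epsilon\,\tilde b_{lk})$ for $l\neq k$, together with a companion matrix $F$, such that $\mu_k(\tilde B)=E\tilde B F$ and $\mu_k(\Lambda)=E^T\Lambda E$ is again compatible with $\mu_k(\tilde B)$. The crucial feature is that $E_{k,+}^T\Lambda E_{k,+}=E_{k,-}^T\Lambda E_{k,-}$, so $\Lambda':=\mu_k(\Lambda)$ is well defined and I am free to evaluate it using whichever sign $\epsilon$ is convenient.

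The heart of the matter is the block computation for $\Lambda'=E^T\Lambda E$. Since every column of $E$ indexed by $i\neq k$ is a standard basis vector, one gets $\lambda'_{ij}=\lambda_{ij}$ whenever $i,j\neq k$; thus off the $k$-th row and column the upper-left block is unchanged and hence, by the inductive hypothesis, still zero. The only new entries are, for $1\le i\le n$ with $i\neq k$,
\[
\lambda'_{ik}
=-\lambda_{ik}+\sum_{l\neq k}\max(0,-\epsilon\,\tilde b_{lk})\,\lambda_{il}
=\sum_{l>n}\max(0,-\epsilon\,\tilde b_{lk})\,\lambda_{il},
\]
where the second equality uses the inductive hypothesis $\Lambda_{11}=0$, namely $\lambda_{ik}=0$ and $\lambda_{il}=0$ for all $1\le l\le n$. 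Here $(\tilde b_{lk})_{l>n}$ is precisely the $k$-th $c$-vector of the current seed.

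This is where the single serious input enters: by the sign-coherence theorem of Gross--Hacking--Keel--Kontsevich~\cite{GHKK}, this $c$-vector is sign-coherent, so either $\tilde b_{lk}\ge 0$ for all $l>n$ or $\tilde b_{lk}\le 0$ for all $l>n$. Choosing $\epsilon$ with the appropriate sign makes $\max(0,-\epsilon\,\tilde b_{lk})=0$ for every $l>n$, whence $\lambda'_{ik}=0$; by the $\epsilon$-independence of $\mu_k(\Lambda)$ this value is forced regardless of convention. Together with $\lambda'_{ki}=-\lambda'_{ik}$ and $\lambda'_{kk}=0$, the whole upper-left block of $\Lambda'$ vanishes, so the mutated seed is integrable, completing the induction. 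I expect the sole obstacle to be the appeal to~\cite{GHKK}; the remaining steps are the elementary computation above, which in fact makes transparent \emph{why} preservation of integrability under mutation is equivalent to sign coherence.
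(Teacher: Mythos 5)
Your argument is correct, and it shares the paper's skeleton exactly: induction on the length of the mutation sequence, base case from Lemma~\ref{lem:classical-principal-Lambda}, and an inductive step that reduces to sign coherence of the bottom block of $\tilde B$, imported from \cite{GHKK} (the paper packages this as Proposition~\ref{prop:key prop} plus the quoted Corollary~5.5 of \cite{GHKK}). Where you genuinely diverge is in how the inductive step is carried out. You mutate the compatibility matrix itself, writing $\mu_k(\Lambda)=E_\epsilon^T\Lambda E_\epsilon$ as in \cite{BZ1} and checking by a block computation that the upper-left $n\times n$ block stays zero: sign coherence kills the sum $\sum_{l>n}\max(0,-\epsilon\,\tilde b_{lk})\lambda_{il}$ for a suitable $\epsilon$, and the $\epsilon$-independence of $E_\epsilon^T\Lambda E_\epsilon$ makes the conclusion unconditional. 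The paper never writes down $\mu_k(\Lambda)$; in Lemma~\ref{lem:key lem} it works directly with the exchange relation \eqref{eq:mutation}, observing that sign coherence places one of the two exchange monomials inside $\kk[X_1,\dots,X_n]$, while compatibility \eqref{eq:compat} forces both monomials to ($q$-)commute with $X_i$, $i\le n$, $i\neq k$, by the \emph{same} scalar, which is therefore trivial. The two computations are two faces of the same fact, and both make the link with sign coherence transparent, but each buys something: yours produces the mutated compatible matrix explicitly (so you get the Poisson bracket, or the quantum commutation matrix, in the new cluster for free, at the cost of invoking the $E$-matrix calculus and, in the quantum case, the identification of the mutated toric frame's matrix with $E^T\Lambda E$ from \cite{BZ1}), whereas the paper's version needs only the exchange relation and applies verbatim to the quantum seeds, from which the classical statement follows by specializing $q$ to $1$. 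Your computation as written is the classical one, which suffices for the stated theorem.
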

\begin{corollary}
For any seed $\Sigma'=({\mathbf y},\tilde B')$, $\mathbf y=(y_1,\dots,y_m)$,  mutation equivalent to a principal integrable seed~$\Sigma$, the natural inclusion $\kk[y_1,\ldots,y_n]\subset {\mathcal U}(\Sigma')={\mathcal U}$ defines a Lagrangian fibration 
$\pi_{\Sigma'}:\mathscr X\to \mathbb A^n$.
\end{corollary}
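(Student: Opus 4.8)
The plan is to deduce the corollary by applying, to the mutated seed~$\Sigma'$, the observation made just before the theorem, namely that an integrable seed with $m=2n$ and invertible compatibility matrix carries a canonical Lagrangian fibration. The starting point is the seed-independence of the geometry: since $\Sigma'$ is mutation equivalent to~$\Sigma$, the Laurent phenomenon gives $\mathcal U(\Sigma')=\mathcal U(\Sigma)=\mathcal U$, so $\mathscr X$ and its Poisson structure are unchanged. Concretely, if $(\Lambda,\tilde B)$ is the principal compatible pair and $(\Lambda',\tilde B')$ is its mutation along the path from $\Sigma$ to~$\Sigma'$, then the birational change of cluster variables identifies $\{\cdot,\cdot\}_\Lambda$ on $\mathscr L_m$ with $\{\cdot,\cdot\}_{\Lambda'}$ on the $\mathbf y$-torus; thus the intrinsic bracket on~$\mathcal U$ is computed in the $\Sigma'$-chart by~$\Lambda'$.

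Next I would verify the three hypotheses of that observation for~$\Sigma'$. First, $m=2n$ is immediate because $\Sigma$ is principal and mutation preserves the size of~$\tilde B$. Second, by Lemma~\ref{lem:classical-principal-Lambda} the principal $\Lambda$ is the unique compatible matrix; a direct computation using $\tilde B=\left(\begin{smallmatrix}B\\ I_n\end{smallmatrix}\right)$ gives $\Lambda=\left(\begin{smallmatrix}0&-D\\ D& B^TD\end{smallmatrix}\right)$, whence $\det\Lambda=\pm\det(D)^2\neq 0$ and $\{\cdot,\cdot\}_\Lambda$ is symplectic. Since mutation of compatible pairs acts by $\Lambda'=E^T\Lambda E$ with $E\in\GL_m(\ZZ)$, we get $\det\Lambda'=\det\Lambda\neq 0$, so $\Lambda'$ remains invertible. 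Third, the preceding theorem asserts that $\Sigma$ is completely integrable, hence $\Sigma'$ is integrable; the point I would emphasize is that the witnessing matrix is exactly the mutated $\Lambda'$ (this is what the proof of the theorem establishes via sign coherence), so $\{y_i,y_j\}_{\Lambda'}=0$ for all $1\le i,j\le n$, i.e.\ the upper-left $n\times n$ block of~$\Lambda'$ vanishes.

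With these in hand the conclusion is formal. The vanishing of the upper-left block of~$\Lambda'$ says precisely that $y_1,\dots,y_n$ Poisson-commute in~$\mathcal U$, so the fibers of $\pi_{\Sigma'}\colon\mathscr X\to\mathbb A^n$ are isotropic for the symplectic bracket $\{\cdot,\cdot\}_{\Lambda'}$. Because $y_1,\dots,y_n$ belong to a cluster they are algebraically independent, so $\pi_{\Sigma'}$ is dominant with generic fibers of dimension $m-n=n$; an isotropic subvariety of half the dimension of a symplectic space is Lagrangian, yielding the asserted Lagrangian fibration.

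I expect the only genuine obstacle to be the last sentence of the middle step: showing that the matrix witnessing integrability of~$\Sigma'$ may be taken to be the mutation~$\Lambda'$ of the principal~$\Lambda$, rather than some unrelated compatible matrix. This matters because the Lagrangian condition must be checked against the \emph{fixed} Poisson structure of~$\mathscr X$ (the one coming from~$\Sigma$), and only $\Lambda'$ computes that structure in the $\Sigma'$-chart. Once the theorem is read as producing this specific $\Lambda'$, the remaining steps---invertibility via $\GL_m(\ZZ)$-conjugation and the dimension count---are routine.
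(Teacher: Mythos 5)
Your proposal is correct and follows essentially the paper's own route: the corollary is obtained by combining the theorem (complete integrability of principal seeds, i.e.\ $\{y_i,y_j\}_{\Lambda'}=0$ for the mutated compatible pair) with the observation preceding it that an integrable seed with $m=2n$ and invertible $\Lambda$ yields a Lagrangian fibration, the invertibility for principal seeds coming from Lemma~\ref{lem:classical-principal-Lambda} and being preserved under mutation since $\Lambda'=E^T\Lambda E$ with $E\in\GL_m(\ZZ)$. The details you supply (the explicit $\Lambda$ with $\det\Lambda=\pm(\det D)^2$, and the isotropic-plus-dimension-count argument for the Lagrangian condition) are exactly the ones the paper leaves implicit.
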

This gives rise to the following natural problem.
\begin{problem}
Given mutation equivalent integrable seeds $\Sigma$, $\Sigma'$, describe the intersection $\pi^{-1}_\Sigma(c)\cap \pi^{-1}_{\Sigma'}(c')$ for generic points~$c,c'\in\mathbb A^n$
and determine for 
which~$\Sigma$, $\Sigma'$ 
this intersection is transversal. 
\end{problem}

It turns out that classical results carry over verbatim to the quantum case.
We 
say that a quantum seed $\Sigma=(\mathbf X,\tilde B)$, where $\mathbf X=(X_1,\dots,X_m)$ and $\tilde B$ is an $m\times n$ integer matrix
(see Section~\ref{sec:Proofs}),
is {\em integrable} if $X_iX_j=X_jX_i$  for all $1\le i,j\le n$. 
Then, similarly to the classical case, we say that a quantum seed $\Sigma$ is {\em completely integrable} if $\Sigma$ and 
all its mutations are integrable.
\begin{theorem}\label{thm:II}
Every integrable principal quantum seed is completely integrable.
\end{theorem}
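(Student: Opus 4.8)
The plan is to reinterpret integrability as the vanishing of a single block of the compatibility matrix and then to isolate sign coherence as the only obstruction to its preservation under mutation. Writing $\Lambda=(\lambda_{ij})$ for the skew-symmetric matrix governing the relations $X_iX_j=q^{\lambda_{ij}}X_jX_i$, the seed is integrable precisely when the principal $n\times n$ block $(\lambda_{ij})_{1\le i,j\le n}$ vanishes. Since mutations of a quantum seed occur only in the exchangeable directions $k\in\{1,\dots,n\}$, and since every seed mutation equivalent to a principal one carries sign-coherent $c$-vectors by~\cite{GHKK}, I would argue by induction on the length of the mutation sequence: granting that the current seed $(\mathbf X,\tilde B,\Lambda)$ is integrable, I show that its mutation in any direction $k\le n$ is again integrable.

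First I would recall from~\cite{BZ} that quantum mutation in direction $k$ acts by $\Lambda'=E_\epsilon^T\Lambda E_\epsilon$, where, for either sign $\epsilon\in\{+1,-1\}$, the matrix $E_\epsilon$ agrees with the identity outside its $k$-th column, whose entries are $(E_\epsilon)_{kk}=-1$ and $(E_\epsilon)_{ik}=\max(0,-\epsilon b_{ik})$ for $i\ne k$, and moreover that $\Lambda'$ is independent of the choice of $\epsilon$. Because $E_\epsilon$ differs from the identity only in column $k$, one immediately gets $(\Lambda')_{ab}=\lambda_{ab}$ whenever $a,b\ne k$. Hence, inside the principal block, every entry not meeting row or column $k$ is unchanged and so still vanishes by the inductive hypothesis; by skew-symmetry it remains only to prove $(\Lambda')_{ak}=0$ for $a\le n$, $a\ne k$.

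For such $a$ the key computation is
\[
(\Lambda')_{ak}=\sum_{j}\lambda_{aj}(E_\epsilon)_{jk}=-\lambda_{ak}+\sum_{j\ne k}\lambda_{aj}\max(0,-\epsilon b_{jk}).
\]
Integrability of the current seed gives $\lambda_{ak}=0$ and $\lambda_{aj}=0$ for all $j\le n$, so the summands with $j\le n$ drop out and only the frozen indices survive:
\[
(\Lambda')_{ak}=\sum_{j>n}\lambda_{aj}\max(0,-\epsilon b_{jk}).
\]
The column $(b_{jk})_{j>n}$ is exactly the $k$-th $c$-vector of the current seed, which by~\cite{GHKK} is sign coherent. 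Choosing $\epsilon=+1$ when this vector is nonnegative and $\epsilon=-1$ when it is nonpositive forces every factor $\max(0,-\epsilon b_{jk})$ to vanish, whence $(\Lambda')_{ak}=0$; since $\Lambda'$ does not depend on $\epsilon$, this is the genuine value. Thus the mutated seed is integrable, closing the induction.

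The entire analytic content is concentrated in the sign coherence of $c$-vectors, and I expect that to be the crux: once it is granted, everything reduces to elementary linear algebra, the only delicate point being the verification that the reduction above uses solely the frozen part of column $k$ together with the $\epsilon$-independence of $\Lambda'$, so that one sign choice dictated by a single $c$-vector suffices. I would also note that running this implication in reverse---observing that integrability of all mutations forces each such sum to vanish for a uniform sign---recovers sign coherence, which accounts for the equivalence announced in the abstract.
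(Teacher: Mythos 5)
Your proof is correct and shares the paper's overall skeleton --- induction on the length of the mutation sequence, with the sign-coherence theorem of \cite{GHKK} for principal exchange matrices supplying exactly the hypothesis needed at each step --- but your key lemma is established by a genuinely different computation. The paper works directly with the cluster variables: by sign coherence one of the two monomials $X_kX^{[b_k]_+-e_k}$, $X_kX^{[-b_k]_+-e_k}$ lies in the commutative algebra $\kk[X_1,\dots,X_n]$, and since compatibility forces both monomials of $X_k'$ to $q$-commute with $X_a$ ($a\le n$, $a\ne k$) with the \emph{same} scalar $q_{ak}$, that scalar must equal $1$, whence $X_k'$ commutes with $X_a$. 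You instead work entirely with the compatible matrix, invoking the transformation rule $\Lambda'=E_\epsilon^T\Lambda E_\epsilon$ (which is in \cite{BZ1}, \emph{Quantum cluster algebras}, rather than \cite{BZ} as you cite) and showing that the principal block of $\Lambda'$ vanishes; sign coherence enters through the choice of $\epsilon$ that kills the frozen contributions, and the $\epsilon$-independence of $\Lambda'$ closes the argument. Your computation of $(\Lambda')_{ak}$ is correct, and your route has the merit of making the role of the $k$-th $c$-vector completely explicit; it is also better adapted to the converse direction you sketch at the end, though to actually recover sign coherence from complete integrability one additionally needs the frozen block $(\lambda_{aj})_{a\le n,\,j>n}$ to have no zero rows, which for a principal integrable seed follows from Lemma~\ref{lem:classical-principal-Lambda} since that block equals $-DC^{-1}$. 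The paper's route, by contrast, never needs the explicit formula for $\Lambda'$, only the compatibility identity $\sum_{j}\lambda_{ij}b_{jk}=0$ for $i\ne k$. Both arguments are sound; the difference is one of bookkeeping rather than substance.
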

By Lemma~\ref{lem:convert-to-int} any (quantum or classical) seed can be converted into a principal integrable one merely by duplicating the ambient (quantum) torus 
as in~\cite{BZ}*{Section~3}.

A {\em generalized quantum integrable system} is a pair $(\mathcal U,\iota)$ where~$\mathcal U$ is 
a $\kk$-algebra of Gelfand-Kirillov dimension~$2n$  
and~$\iota:\kk[x_1,\dots,x_n]\hookrightarrow \mathcal U$ is an embedding of algebras such 
that $\iota(\kk[x_1,\dots,x_n])$ is a maximal commutative subalgebra of $\mathcal U$. This defines a quantum  analogue of Lagrangian fibration, whose {\em quantum Lagrange fiber} over a maximal ideal~$\mathfrak m$ of~$\kk[x_1,\dots,x_n]$ is the 
left $\mathcal U$-module $\mathcal U_{\iota,\mathfrak m}:=
\mathcal U/\mathcal U\cdot\iota(\mathfrak m)$. 
Then the (quantum) intersection of  $\mathcal U_{\iota, \mathfrak m}$ and 
$\mathcal U_{\iota',\mathfrak m'}$ is the left $\mathcal U$-module $\mathcal U/\mathcal U\cdot(\iota(\mathfrak m)+\iota'(\mathfrak m'))$.

Following~\cite{BZ1}, given a field $\kk$ containing $\QQ(q^{\frac{1}{2}})$ and a {\em quantum seed}  $\Sigma=(\mathbf X,\tilde B)$, we denote by $\mathcal U=\mathcal U(\Sigma)$ its {\em upper quantum cluster algebra} (see Section~\ref{sec:Proofs} for the details). 

\begin{corollary}\label{cor:integrable system}
For any quantum seed $\Sigma'=(\mathbf Y,\tilde B')$ mutation equivalent to 
a given principal integrable quantum seed $\Sigma$, the natural inclusion $\iota:\kk[Y_1,\ldots,Y_n]\hookrightarrow {\mathcal U}(\Sigma')={\mathcal U}$ defines a generalized quantum integrable system $({\mathcal U},\iota)$. 
\end{corollary}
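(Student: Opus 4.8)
The plan is to verify the three defining properties of a generalized quantum integrable system for the pair $(\mathcal U,\iota)$: that $\iota$ is a well-defined embedding with commutative image, that $\mathcal U$ has Gelfand--Kirillov dimension $2n$, and that $\iota(\kk[Y_1,\dots,Y_n])$ is a maximal commutative subalgebra.

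First I would record that $\Sigma'$, being mutation equivalent to the principal integrable seed $\Sigma$, is itself integrable by Theorem~\ref{thm:II}. Thus $Y_iY_j=Y_jY_i$ for $1\le i,j\le n$, and since the cluster variables $Y_1,\dots,Y_n$ are algebraically independent in the ambient skew field, $\iota\colon\kk[Y_1,\dots,Y_n]\hookrightarrow\mathcal U$ is a genuine embedding with commutative image. For the dimension I would use that a principal seed has $m=2n$ and that this is preserved under mutation, so $\mathcal U$ is sandwiched between the quantum affine space generated by $Y_1,\dots,Y_{2n}$ and the ambient quantum torus $\mathscr T$; since $\kk\supseteq\QQ(q^{\frac12})$ forces $q$ to be generic, both of these have Gelfand--Kirillov dimension $2n$, whence $\operatorname{GKdim}\mathcal U=2n$.

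The substance is the maximality, which I would establish by showing that $\iota(\kk[Y_1,\dots,Y_n])$ equals its own centralizer in $\mathcal U$. Working inside $\mathscr T$, conjugation by $Y_k$ scales a monomial $Y^a$ by $q$ raised to a power given by the $k$-th row of $\Lambda'$ applied to $a$; as $q$ is not a root of unity these scalars separate monomials, so the centralizer of $Y_1,\dots,Y_n$ in $\mathscr T$ is spanned by those $Y^a$ for which the top $n$ entries of $\Lambda' a$ vanish. Writing $\Lambda'$ and $\tilde B'=\begin{pmatrix}B_1'\\ B_2'\end{pmatrix}$ in $n\times n$ blocks and combining integrability ($\Lambda'_{11}=0$) with the compatibility relation $\tilde B'^T\Lambda'=(D\ \mathbf 0)$, a short block computation gives $B_2'^T\Lambda_{12}'^T=-D$; as $D$ is invertible this forces both $\Lambda'_{12}$ and the frozen block $B_2'$ to be invertible. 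Invertibility of $\Lambda'_{12}$ then kills the frozen part of $a$, so the centralizer of $\kk[Y_1,\dots,Y_n]$ in $\mathscr T$ is exactly $\kk[Y_1^{\pm1},\dots,Y_n^{\pm1}]$, and hence its centralizer in $\mathcal U$ is $\kk[Y_1^{\pm1},\dots,Y_n^{\pm1}]\cap\mathcal U$.

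It remains to show $\kk[Y_1^{\pm1},\dots,Y_n^{\pm1}]\cap\mathcal U=\kk[Y_1,\dots,Y_n]$, i.e. that no negative power of a mutable variable survives in $\mathcal U$; this is the main obstacle. I would treat one direction $k\le n$ at a time via the quantum Laurent phenomenon: an element $u$ of the intersection is a Laurent polynomial in the cluster mutated at $k$, and writing $u=\sum_j v_j Y_k^j$ with $v_j\in\kk[Y_i^{\pm1}:i\le n,\,i\ne k]$ and substituting $Y_k=PY_k'^{-1}$, where $P$ is the exchange binomial (a sum of two coprime monomials, hence irreducible and not a monomial), shows that a lowest term with $j<0$ would force $P^{|j|}\mid v_j$ in the Laurent ring. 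But invertibility of $B_2'$ makes the $k$-th column of the frozen block nonzero, so $P$ involves a frozen variable while $v_j$ does not; by unique factorization $P\nmid v_j$, a contradiction. Therefore $u\in\kk[Y_1,\dots,Y_n]$, the image of $\iota$ is self-centralizing and thus maximal commutative, and $(\mathcal U,\iota)$ is a generalized quantum integrable system. I expect the delicate points to be the irreducibility and the frozen-variable content of the exchange binomial, which is precisely where the structural input---non-singularity of the frozen block, guaranteed here by integrability together with compatibility---is used.
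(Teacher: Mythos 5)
Your proof is correct and follows essentially the same route as the paper: integrability of $\Sigma'$ from Theorem~\ref{thm:II}, the centralizer computation in the ambient quantum torus via invertibility of the off-diagonal block of $\Lambda'$ (equivalently of the frozen block of $\tilde B'$), and the identification $\kk[Y_1^{\pm1},\dots,Y_n^{\pm1}]\cap\mathcal U=\kk[Y_1,\dots,Y_n]$. You supply more detail than the paper at two points it leaves implicit --- the Gelfand--Kirillov dimension count and the exchange-binomial divisibility argument behind the paper's ``easy to see'' claim that $\mathcal U_i'=\kk[X_1^{\pm 1},\dots,X_i,\dots,X_n^{\pm1}]$ --- and both are sound, modulo replacing the appeal to ``unique factorization'' by the standard grading-by-frozen-exponents argument, which is the cleaner way to see $P\nmid v_j$ in the noncommutative setting.
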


\section{Notation and proofs}\label{sec:Proofs}

We will only prove quantum results since their classical counterparts follow by specializing $q$ to~$1$.

Let~$\Lambda\in\Mat_{m\times m}(\ZZ)$ be compatible with~$\tilde B$ in the sense 
of~\eqref{eq:compat} where $D\in\Mat_{n\times n}(\ZZ)$ and has positive diagonal entries.
Following~\cite{BZ1},
we associate with the pair $(\Lambda,\tilde B)$ a {\em quantum seed} $\Sigma=(\mathbf X,\tilde B)$,
$\mathbf X=(X_1,\dots,X_m)\in (\mathcal F^\times)^m$ where $\mathcal F$ is a skew field, 
such that the subalgebra~$\mathscr L_{\mathbf X}$ of~$\mathcal F$ generated by~$\mathbf X$ over some central subfield~$\kk$ of~$\mathcal F$ 
containing~$\QQ(q^{\frac12})$
has presentation
$$
X_i X_j=q^{\lambda_{ij}} X_j X_i,\qquad 1\le i<j\le m.
$$
Given $1\le j\le n$, we define $\mu_j(\Sigma)=(\mathbf X',\mu_j(\tilde B))$ where 
$\mu_j(\tilde B)$ is the Fomin-Zelevinsky mutation of~$\tilde B$ from~\cite{FZ}*{} and $\mathbf X'$ is obtained from~$\mathbf X$ by 
replacing $X_j$ with 
\begin{equation}\label{eq:mutation}
X'_j=X^{[b_j]_+-e_j}+X^{[-b_j]_+-e_j},
\end{equation}
where $b_j$ is the $j$th column of~$\tilde B$, for each $a=(a_1,\dots,a_m)\in\mathbb Z^m$ we set $[a]_+=(\max(0,a_1),\dots,\max(0,a_m))$,
$$
X^a=q^{\frac12 \sum_{1\le i<j\le m}\lambda_{ji} a_i a_j} X_1^{a_1}\cdots X_m^{a_m}
$$
and $\{e_i\}_{1\le i\le m}$ is the standard basis of~$\mathbb Z^m$. After~\cite{BZ1}*{Section~2}, $\mu_j(\Sigma)$ is also a quantum seed and we refer
to it as the $j$th mutation of~$\Sigma$.
A quantum seed~$\Sigma'$ is {\em mutation equivalent} to $\Sigma$ if it can be obtained from~$\Sigma$ by a sequence of mutations.

Following~\cite{NZ}, 
we say that an $m\times n$-matrix $\tilde B$ is {\em sign-coherent} if for every $1\le j\le n$ there exists $\epsilon_j\in\{-1,1\}$
such that $\epsilon_j b_{ij}\ge 0$ for all $n+1\le i\le m$. We say that~$\tilde B$ is {\em totally sign-coherent}
if all matrices mutation equivalent to~$\tilde B$ are sign-coherent.
\begin{proposition}\label{prop:key prop}
Suppose $\Lambda\in\Mat_{m\times m}(\ZZ)$ and $\tilde B\in\Mat_{m\times n}(\ZZ)$ are compatible, $\tilde B$ is totally sign-coherent
and the corresponding quantum seed~$\Sigma$ is integrable.
Then $\Sigma$ is completely integrable.
\end{proposition}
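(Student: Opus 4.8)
The plan is to reduce the statement to a single-mutation claim and then iterate. Since $\tilde B$ is totally sign-coherent, every matrix obtained from it by a sequence of mutations is again sign-coherent, so it suffices to prove the following: if a quantum seed $\Sigma=(\mathbf X,\tilde B)$ is integrable and $\tilde B$ is sign-coherent, then $\mu_k(\Sigma)$ is integrable for every $1\le k\le n$. Granting this, complete integrability follows by induction on the length of a mutation sequence: the base case is the hypothesis that $\Sigma$ is integrable, and at each step the seed $\Sigma'$ produced so far is integrable with a sign-coherent exchange matrix, so one more mutation keeps it integrable.

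To prove the single-mutation claim, first recall that integrability of $\Sigma$ means precisely $\lambda_{ij}=0$ for all $1\le i,j\le n$, since $X_iX_j=q^{\lambda_{ij}}X_jX_i$. After mutating in a direction $1\le k\le n$ the new cluster variables are $X'_i=X_i$ for $i\ne k$ and $X'_k=X^{[b_k]_+-e_k}+X^{[-b_k]_+-e_k}$. For $i,j\ne k$ the relation $X'_iX'_j=X'_jX'_i$ is inherited from $\Sigma$, so the only thing to check is that $X'_k$ commutes with each $X_j$, $1\le j\le n$, $j\ne k$. Writing $\Lambda(a,b)=a^{T}\Lambda b$, one has $X_jX^a=q^{\Lambda(e_j,a)}X^aX_j$, so it is enough to show that $\Lambda(e_j,[b_k]_+-e_k)=0$ and $\Lambda(e_j,[-b_k]_+-e_k)=0$.

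Now I would carry out the computation by splitting each pairing over the principal indices $l\le n$ and the coefficient indices $l>n$. The term $\Lambda(e_j,e_k)=\lambda_{jk}$ vanishes by integrability, as does every contribution with $l\le n$ in $\sum_l\lambda_{jl}\max(0,\pm b_{lk})$, again because $\lambda_{jl}=0$ there. Thus only $\sum_{l>n}\lambda_{jl}\max(0,\pm b_{lk})$ survives, and this is exactly where sign-coherence enters: choosing the sign $\epsilon_k$ with $\epsilon_k b_{lk}\ge 0$ for all $l>n$, the truncation $\max(0,\cdot)$ collapses uniformly in $l$ to $b_{lk}$ in one term and to $0$ in the other (and vice versa if $\epsilon_k=-1$). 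It then remains to evaluate $\sum_{l>n}\lambda_{jl}b_{lk}$, and compatibility does this: by \eqref{eq:compat} and skew-symmetry of $\Lambda$ one has $\sum_{l}b_{lk}\lambda_{lj}=\delta_{kj}d_k$, and since the $l\le n$ part again drops out by integrability, $\sum_{l>n}\lambda_{jl}b_{lk}=-\sum_{l>n}b_{lk}\lambda_{lj}=0$ for $j\ne k$. Hence both pairings vanish and $X'_k$ commutes with $X_j$, so $\mu_k(\Sigma)$ is integrable.

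The step I expect to be the crux is the middle one, the simplification of $\sum_{l>n}\lambda_{jl}\max(0,\pm b_{lk})$: without sign-coherence there is no reason for the truncation $\max(0,\cdot)$ to agree with the linear function $b_{lk}$ (resp.\ with $0$), whereas the compatibility relation only controls the untruncated sum $\sum_{l>n}\lambda_{jl}b_{lk}$. Sign-coherence of the single column $k$ of $\tilde B$ is precisely what bridges this gap, and total sign-coherence is what allows the argument to be iterated along an arbitrary mutation sequence.
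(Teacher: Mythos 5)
Your proof is correct and follows essentially the same route as the paper: reduce to the one-mutation lemma (integrable plus sign-coherent implies each $\mu_k(\Sigma)$ is integrable) and induct along the mutation sequence, with the lemma resting on the same three ingredients — integrability kills the $l\le n$ contributions, sign-coherence collapses the truncations $[\pm b_k]_+$ on the coefficient rows, and compatibility \eqref{eq:compat} kills the remaining sum $\sum_{l>n}\lambda_{jl}b_{lk}$ for $j\ne k$. The only difference is presentational: the paper argues structurally (one of the two monomials $X_kX^{[\pm b_k]_+-e_k}$ lies in the commutative subalgebra $\kk[X_1,\dots,X_n]$, and both monomials carry the same commutation factor $q_{ik}$), while you unwind this into an explicit computation with the bilinear form $\Lambda$.
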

\begin{proof}
We need the following 
\begin{lemma}\label{lem:key lem}
Suppose that $\Sigma=(\mathbf X,\tilde B)$ is integrable and $\tilde B$ is sign-coherent. Then 
$\mu_j(\Sigma)$, $1\le j\le n$, is integrable.
\end{lemma}
\begin{proof}
Since~$\tilde B$ is sign-coherent, either $X_j X^{[b_j]_+-e_j}$ or~$X_jX^{[-b_j]_+-e_j}$ is 
contained in $\kk[X_1,\dots,X_n]$. 
Since for every~$1\le i\not=j\le n$ we have 
$X_i X^{[b_j]_+-e_j}=q_{ij} X^{[b_j]_+-e_j}X_i$ and $X_i X^{[-b_j]_+-e_j}=q_{ij}X^{[-b_j]_+-e_j}X_i$ for some $q_{ij}\in\kk^\times$, it follows 
that $q_{ij}=1$. 
Then by~\eqref{eq:mutation} we have $X_i X'_j=X'_j X_i$ for all~$1\le i\not=j\le n$.
\end{proof}

We complete the proof by induction on the number of mutations applied to the initial seed~$\Sigma$. If~$\Sigma'=\mu_{j_{1}}\cdots\mu_{j_{k}}(\Sigma)=
\mu_{j_1}(\Sigma'')$ where  
$\Sigma''=(\mathbf X'',\tilde B'')=\mu_{j_{2}}\cdots\mu_{j_k}(\Sigma)$ is integrable by the induction hypothesis and $\tilde B''$ is sign-coherent by 
assumption on~$\tilde B$. It remains to apply the Lemma.
\end{proof}

\begin{proof}[Proof of Theorem~\ref{thm:II}] 
Recall from~\cite{FZ-CAIV}*{Remark~3.2} that $\tilde B$ is called principal if 
$\tilde B=\begin{pmatrix} B\\ I_n
         \end{pmatrix}$ where 
         $B\in\Mat_{n\times n}(\ZZ)$ and $DB$ is skew symmetric for some 
         $D\in\Mat_{n\times n}(\ZZ)$ diagonal with positive diagonal entries.
The following result was initially conjectured in~\cite{FZ-CAIV}*{Conjecture~5.4 and Proposition~5.6(iii)} (see also~\cite{NZ}*{(1.8)}).
\begin{lemma}[\cite{GHKK}*{Corollary~5.5}]
Any principal $\tilde B$ is totally sign-coherent.
\end{lemma}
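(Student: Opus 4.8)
The plan is to reduce the statement to the sign-coherence of $c$-vectors and then to prove the latter using the scattering diagram associated with $(B,D)$. Since $\tilde B$ is principal, its bottom $n\times n$ block is the identity, and an easy induction on mutations shows that after any sequence $\mu_{j_k}\cdots\mu_{j_1}$ this block becomes the $c$-matrix $C=(c_1\mid\cdots\mid c_n)$ of the resulting seed, whose columns are the $c$-vectors $c_j\in\ZZ^n$. Thus total sign-coherence of $\tilde B$ is exactly the assertion that every $c$-vector occurring in the mutation class is either componentwise nonnegative or componentwise nonpositive, and it is this form that I would establish.

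First I would attach to $(B,D)$ its scattering diagram $\mathfrak D$: a locally finite collection of codimension-one rational cones (walls) in $M_\RR=\RR^n$, each decorated with a wall-crossing automorphism of a structure algebra completed along the monoid $\sum_i\ZZ_{\ge 0}e_i$, and generated from the $n$ initial walls $e_i^\perp$ carrying functions $1+z^{e_i}$ (weighted by $D$). The basic structural input, built order by order in the monoid grading, is the existence and essential uniqueness of a \emph{consistent} such diagram, consistency meaning that the path-ordered product of wall-crossings around any small loop is the identity. A key feature of this construction is that every wall function produced by scattering has all of its exponents in the submonoid $\sum_i\ZZ_{\ge 0}e_i$, since new walls arise from Lie brackets of existing ones and exponents add.

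Next I would set up the dictionary between mutation combinatorics and the geometry of $\mathfrak D$. The $g$-vector cones of the seeds in the mutation class assemble into a subfan of $\mathfrak D$, the \emph{cluster complex}; a mutation $\mu_j$ corresponds to crossing the wall separating a chamber from its $j$th neighbour; and, under this identification, the $c$-vectors of a seed are precisely $\pm$ the primitive exponents of the walls bounding its chamber. Combining this with the previous paragraph gives the result: since each such primitive exponent lies in the nonnegative monoid $\sum_i\ZZ_{\ge 0}e_i$, the corresponding $c$-vector is a nonnegative or a nonpositive integer vector, i.e.\ sign-coherent.

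The main obstacle is the construction underlying the dictionary: proving that a consistent scattering diagram exists, that the $g$-vector fan embeds in it, and that mutations are realized faithfully as wall-crossings so that $c$-vectors may be read off as wall exponents. This is the technical heart of \cite{GHKK} and cannot be reduced to the one-step argument of Lemma~\ref{lem:key lem}; it is carried out by identifying the wall functions with generating series of broken lines (theta functions), a description that simultaneously yields the stronger positivity of all structure constants, of which the monoid-support property used above is the part relevant to sign-coherence.
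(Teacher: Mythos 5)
The paper gives no proof of this lemma: it is imported verbatim from \cite{GHKK}*{Corollary~5.5}, and your sketch is a faithful outline of the argument given there (reduction to sign-coherence of $c$-vectors, construction of the consistent scattering diagram for $(B,D)$, embedding of the cluster complex, and reading off $c$-vectors as signed wall exponents, with the honest caveat that the existence and consistency of the diagram is the real work). So your proposal takes essentially the same route as the paper's source, and nothing more needs to be said.
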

Theorem~\ref{thm:II} is immediate from Proposition~\ref{prop:key prop} and the above Lemma.
\end{proof}

\begin{proof}[Proof of Corollary~\ref{cor:integrable system}]

We need the following obvious classification of  (quantum) integrable seeds with $m=2n$. 

\begin{lemma}\label{lem:classical-principal-Lambda}
\label{le:compatible} Let \begin{equation}\label{eq:block lambda}
\Lambda=\begin{pmatrix}\mathbf 0&\Lambda_1\\ -\Lambda_1^T&\Lambda_2\end{pmatrix},\qquad \tilde B=\begin{pmatrix} B\\ C
         \end{pmatrix}
\end{equation} 
with 
$B,C,\Lambda_1,\Lambda_2 \in\Mat_{n\times n}(\QQ)$ and~$\Lambda_2^T=-\Lambda_2$. Then \eqref{eq:compat} holds for some invertible diagonal $D\in\Mat_{n\times n}(\QQ)$
if and only if 
$$\det C\ne 0,\quad (DB)^T=-DB,\quad \Lambda_1=-DC^{-1},\quad \Lambda_2=-(C^{-1})^TDB C^{-1}.
$$
\end{lemma}

Furthermore, following~\cite{BZ1}, to each quantum seed $\Sigma$ one associates the {\em quantum upper cluster algebra}
$\mathcal U(\Sigma)=\bigcap_{1\le i\le n} \mathcal U_i$ where $\mathcal U_i$ is the subalgebra 
of $\mathscr L_{\mathbf X}$ generated by $\mathbf X$ and 
$X_i'$. As shown in~\cite{BZ1}*{Theorem~5.1}, $\mathcal U(\Sigma)=\mathcal U(\mu_j(\Sigma))$ for all~$1\le j\le n$.
We need the following Lemma.
\begin{lemma}
Let $\Sigma=(\mathbf X,\tilde B)$ be a  quantum integrable seed with $m=2n$. Then 
\begin{enumerate}[{\rm(a)}]
\item $\kk[ X_1^{\pm 1},\dots,X_n^{\pm 1}]$ is a maximal commutative subalgebra of~$\mathscr L_{\mathbf X}$;
\item $\kk[X_1,\dots,X_n]$ is a maximal commutative subalgebra of~$\mathcal U(\Sigma)$.
\end{enumerate}
\end{lemma}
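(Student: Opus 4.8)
The plan is to handle the two parts in turn, using throughout that $\mathscr L_{\mathbf X}$ is a quantum torus with $\kk$-basis $\{X^a:a\in\ZZ^{2n}\}$, that $q$ is not a root of unity, and that by Lemma~\ref{le:compatible} (applicable since integrability forces the top-left block of the skew-symmetric $\Lambda$ to vanish) the off-diagonal block $\Lambda_1=-DC^{-1}$ is invertible, while $b_{ii}=0$ because $DB$ is skew-symmetric. Here $\tilde B=\binom{B}{C}$ with $\det C\neq 0$, and I write $c_i$ for the $i$th column of $C$.

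For part (a) I would first reduce to monomials. Conjugation by $X_i$ scales the basis, $X_i X^a X_i^{-1}=q^{(\Lambda a)_i}X^a$ (up to the standard normalisation), so $Y=\sum_a c_a X^a$ centralizes $X_1,\dots,X_n$ iff $c_a\bigl(q^{(\Lambda a)_i}-1\bigr)=0$ for all $a$ and all $i\le n$; since $q$ is not a root of unity this forces $(\Lambda a)_i=0$ for every $a$ in the support and every $i\le n$. Writing $a=(a',a'')\in\ZZ^n\times\ZZ^n$, the block form of $\Lambda$ turns these conditions into $\Lambda_1a''=0$, whence $a''=0$ as $\Lambda_1$ is invertible. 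Thus every monomial of such a $Y$ lies in $\kk[X_1^{\pm1},\dots,X_n^{\pm1}]$, so this subalgebra equals its own centralizer; being commutative by integrability, it is maximal commutative.

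For part (b) the inclusion $\kk[X_1,\dots,X_n]\subseteq\mathcal U(\Sigma)$ is immediate, since each $X_i$ ($i\le n$) is one of the generators $\mathbf X$ and hence lies in every $\mathcal U_j$, so in $\mathcal U(\Sigma)=\bigcap_j\mathcal U_j$. For maximality, any $g\in\mathcal U(\Sigma)$ commuting with $X_1,\dots,X_n$ lies in $\kk[X_1^{\pm1},\dots,X_n^{\pm1}]$ by part (a), so it suffices to establish
\[
\mathcal U(\Sigma)\cap \kk[X_1^{\pm1},\dots,X_n^{\pm1}]=\kk[X_1,\dots,X_n].
\]
The content is the inclusion $\subseteq$: a Laurent polynomial $f$ in $X_1,\dots,X_n$ lying in $\mathcal U(\Sigma)$ must carry no negative powers. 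I would grade $\mathscr L_{\mathbf X}$ by the exponent $\deg_i$ of $X_i$ and use that every element of $\mathcal U_i=\mathscr L_{\hat{\mathbf X}_i}[X_i,X_i']$, where $\mathscr L_{\hat{\mathbf X}_i}=\kk[X_k^{\pm1}:k\neq i]$, is an $\mathscr L_{\hat{\mathbf X}_i}$-combination of the $X_i^s$ ($s\ge0$) and $(X_i')^t$ ($t\ge1$) after reducing via $X_iX_i'\in\mathscr L_{\hat{\mathbf X}_i}$. Since $b_{ii}=0$, the element $X_i'$ is $\deg_i$-homogeneous of degree $-1$, so the $\deg_i=-t$ component of $f$ is exactly $b_t(X_i')^t$ for some $b_t\in\mathscr L_{\hat{\mathbf X}_i}$, and as a component of $f$ it must itself be a Laurent polynomial in $X_1,\dots,X_n$, i.e.\ have degree $0$ for the $\ZZ^n$-grading by the exponents of the coefficient variables $X_{n+1},\dots,X_{2n}$.

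The crux — and the one genuinely nontrivial step — is to kill these negative components, and here I would exploit the invertibility of $C$. Because $c_i\neq0$, the two monomials of $X_i'$ carry the distinct coefficient-degrees $[c_i]_+$ and $[-c_i]_+$, differing by $c_i$. Choosing a linear functional $\ell$ on the coefficient-degree lattice with $\ell(c_i)>0$ (e.g.\ $\ell(v)=v\cdot c_i$) and comparing top and bottom $\ell$-degrees in the domain $\mathscr L_{\mathbf X}$, the product $b_t(X_i')^t$ has top-minus-bottom $\ell$-degree $(M_t-m_t)+t\,\ell(c_i)$ with $M_t\ge m_t$; being of coefficient-degree $0$ would force top $=$ bottom $=0$, hence $M_t-m_t=-t\,\ell(c_i)<0$, impossible unless $b_t=0$. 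Thus all negative $\deg_i$-components vanish, so $f$ has non-negative powers of $X_i$; running this for each $i\le n$ gives $f\in\kk[X_1,\dots,X_n]$ and finishes (b). I expect the grading bookkeeping of this final step to be the main obstacle, the rest following directly from invertibility of $\Lambda_1$ and $C$ together with $q$ not being a root of unity.
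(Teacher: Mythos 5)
Your proof is correct and follows essentially the same route as the paper: part (a) via the monomial description of the centralizer of $X_1,\dots,X_n$ in the quantum torus together with the invertibility of $\Lambda_1$ supplied by Lemma~\ref{le:compatible}, and part (b) by reducing to the identity $\mathcal U_i\cap \kk[X_1^{\pm1},\dots,X_n^{\pm1}]=\kk[X_1^{\pm1},\dots,X_i,\dots,X_n^{\pm1}]$ and intersecting over $i\le n$. The only difference is that you spell out, via the $\deg_i$-grading and the $\ell$-degree comparison using $c_i\ne 0$, the step the paper dismisses as ``easy to see''.
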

\begin{proof} 
To prove (a) note that for each~$1\le i\le n$ the centralizer~$\mathscr C_i$ of~$X_i$ in~$\mathscr L_{\mathbf X}$ 
is the $\kk$-linear span of $\{X^a\,:\, \sum_{1\le j\le m} \lambda_{ij}a_j=0\}$. 
Since $\Lambda$ is as in \eqref{eq:block lambda} with $\det \Lambda_1\ne 0$, $\bigcap_{1\le i\le n}\mathscr C_i$ is spanned by all
monomials~$X^a$ with $a_j=0$, $j>n$. This implies that~$\kk[X_1^{\pm 1},\dots,X_n^{\pm 1}]$ is a maximal commutative subalgebra of~$\mathscr L_{\mathbf X}$. 

To prove~(b), denote 
$\mathcal U_i'=\mathcal U_i\cap \kk[X_1^{\pm 1},\dots,X_n^{\pm 1}]$. It is easy to see that~$\mathcal U'_i=\kk[X_1^{\pm 1},\dots,X_i,\dots, X_n^{\pm 1}]$. 
Since $\mathcal U\cap \kk[X_1^{\pm 1},\dots,X_n^{\pm 1}]=\bigcap_{1\le i\le n}\mathcal U'_i=\kk[X_1,\dots,X_n]$,
part~(b) is now immediate from part~(a).
\end{proof}
The Corollary~\ref{cor:integrable system} is now immediate because each quantum seed mutation equivalent to a given principal integrable one is automatically integrable by Theorem \ref{thm:II}.
\end{proof}

We conclude by showing that every (quantum) seed can be converted into a principal completely integrable one. Recall 
that in~\cite{BZ}*{Section~3} to every seed~$\Sigma=(\mathbf X,\tilde B)$ in~$\mathscr L_{\mathbf X}$ one associates 
a seed $\Sigma^\bullet=(\mathbf X^\bullet,\tilde B^\bullet)$ in the duplicated quantum torus $\mathscr L_{\mathbf X}^{(2)}=\bigoplus_{e,e'\in \ZZ^m} \kk X^{(e,e')}$ with $\Lambda^{(2)}=\begin{pmatrix}\Lambda &\mathbf 0 \\ 
                              \mathbf 0 &-\Lambda
															\end{pmatrix}$,  
as follows
$$
X^\bullet_i=X^{(e_i,e_i)},\qquad X^\bullet_{i+n}=X^{(b_i^{>n},-b_i^{\le n})},\qquad 1\le i\le n,\qquad \tilde B^\bullet=\begin{pmatrix}B\\I_n\end{pmatrix}
$$
where  we abbreviate $a^{>n}=\sum_{n<i\le m}a e_{i}$, $a^{\le n}=\sum_{1\le i\le n}a_i e_i$ for $a=(a_1,\dots,a_m)\in \ZZ^m$. By definition,
$\mathscr L_{\mathbf X}$ identifies with a subalgebra of~$\mathscr L_{\mathbf X}^{(2)}$ via $X^e\mapsto X^{(e,0)}$, $e\in\ZZ^m$. Then 
$\mathscr L_{\mathbf X}^{(2)}=\mathscr L_{\mathbf X}\cdot \mathscr C_{\mathbf X}$ and hence
$\mathcal U(\Sigma)\cdot  \mathscr C_{\mathbf X}=\mathcal U(\Sigma^\bullet)\cdot  \mathscr C_{\mathbf X}$ as subalgebras of $\mathscr L^{(2)}_{\bf X}$, 
where $\mathscr C_{\mathbf X}$ is the subalgebra of~$\mathscr L_{\mathbf X}^{(2)}$ generated by the $X^{(e_i,0)}$, $n<i\le m$ and 
by the $X^{(0,e_j)}$, $1\le j\le m$.

The following is immediate from \cite{BZ}*{(3.19)--(3.22) and Lemma~3.4}.
\begin{lemma}
The seed $\Sigma^\bullet$  is principal and integrable,
hence completely integrable.
\label{lem:convert-to-int}
\end{lemma}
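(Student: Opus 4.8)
The plan is short: principality of $\Sigma^\bullet$ is precisely the content of the cited results \cite{BZ}*{(3.19)--(3.22) and Lemma~3.4}, so it remains only to verify integrability and then to invoke Theorem~\ref{thm:II}.

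For integrability I would compute directly in the duplicated torus. The mutable variables are $X^\bullet_i=X^{(e_i,e_i)}$ for $1\le i\le n$, and two monomials $X^{(a,a')}$ and $X^{(b,b')}$ commute up to the factor $q$ raised to $(a,a')^T\Lambda^{(2)}(b,b')$. Since $\Lambda^{(2)}=\begin{pmatrix}\Lambda&\mathbf 0\\\mathbf 0&-\Lambda\end{pmatrix}$ is block-antidiagonal, pairing $(e_i,e_i)$ against $(e_j,e_j)$ gives $e_i^T\Lambda e_j+e_i^T(-\Lambda)e_j=0$. Hence $X^\bullet_iX^\bullet_j=X^\bullet_jX^\bullet_i$ for all $1\le i,j\le n$, which is exactly integrability of $\Sigma^\bullet$.

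With $\Sigma^\bullet$ both principal and integrable, Theorem~\ref{thm:II} yields complete integrability at once. The only step drawing on external input is principality: there one checks that $\tilde B^\bullet=\begin{pmatrix}B\\I_n\end{pmatrix}$ has $DB$ skew-symmetric for the positive diagonal $D$ inherited from the compatibility $\tilde B^T\Lambda=(D\mid\mathbf 0)$ of the original pair. This is encoded in the block identities of \cite{BZ}*{(3.19)--(3.22)}; equivalently, since the induced compatibility form has $C=I_n$, the relations of Lemma~\ref{le:compatible}---in particular $(DB)^T=-DB$---apply verbatim. This bookkeeping is the one substantive ingredient; everything else is the single commutation identity above together with a direct appeal to Theorem~\ref{thm:II}.
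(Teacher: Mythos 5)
Your proposal is correct and takes essentially the same route as the paper, which disposes of the lemma by citing \cite{BZ}*{(3.19)--(3.22) and Lemma~3.4}; your explicit computation of the commutation exponent $e_i^T\Lambda e_j+e_i^T(-\Lambda)e_j=0$ simply spells out the integrability part, and the appeal to Theorem~\ref{thm:II} for the ``hence'' is exactly as intended. (One trivial slip: $\Lambda^{(2)}$ is block-\emph{diagonal}, not block-antidiagonal --- which is in fact what your computation uses.)
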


\begin{bibdiv}
\begin{biblist}
\bib{A}{book}{
   author={Arnol{\cprime}d, V. I.},
   title={Mathematical methods of classical mechanics},
   series={Graduate Texts in Mathematics},
   volume={60},
   publisher={Springer-Verlag},
   date={1989},
}

\bib{BZ1}{article}{
   author={Berenstein, Arkady},
   author={Zelevinsky, Andrei},
   title={Quantum cluster algebras},
   journal={Adv. Math.},
   volume={195},
   date={2005},
   number={2},
   pages={405--455},
}

\bib{BZ}{article}{
author={Berenstein, Arkady},
author={Zelevinsky, Andrei},
title={Triangular bases in quantum cluster algebras},
journal={Int. Math. Res. Not.},
volume={2014},
date={2014},
number={6},
pages={1651--1688}
}

\bib{FZ}{article}{
   author={Fomin, Sergey},
   author={Zelevinsky, Andrei},
   title={Cluster algebras. I. Foundations},
   journal={J. Amer. Math. Soc.},
   volume={15},
   date={2002},
   number={2},
   pages={497--529},
}

\bib{FZ-CAIV}{article}{
   author={Fomin, Sergey},
   author={Zelevinsky, Andrei},
   title={Cluster algebras. IV. Coefficients},
   journal={Compos. Math.},
   volume={143},
   date={2007},
   number={1},
   pages={112--164},
}

\bib{GHKK}{article}{
author={Gross, Mark},
author={Hacking, Paul},
author={Keel, Sean},
author={Kontsevich, Maxim},
title={Canonical bases for cluster algebras},
eprint={arXiv:1411.1394}
}

\bib{NZ}{article}{
   author={Nakanishi, Tomoki},
   author={Zelevinsky, Andrei},
   title={On tropical dualities in cluster algebras},
   conference={
      title={Algebraic groups and quantum groups},
   },
   book={
      series={Contemp. Math.},
      volume={565},
      publisher={Amer. Math. Soc., Providence, RI},
   },
   date={2012},
   pages={217--226},
}

\end{biblist}
\end{bibdiv}
\end{document}